\theoremstyle{plain}
\newtheorem{theorem}{Theorem}[section]
\newtheorem{corollary}[theorem]{Corollary}
\newtheorem{proposition}[theorem]{Proposition}
\newtheorem{lemma}[theorem]{Lemma}
\theoremstyle{definition}
\newtheorem{definition}[theorem]{Definition}
\newtheorem{remark}[theorem]{Remark}
\newtheorem{notation}[theorem]{Notation}
\newtheorem{example}[theorem]{Example}
\newtheorem{problem}[theorem]{Problem}
\newtheorem{question}[theorem]{Question}
\numberwithin{equation}{section}
\begin{document}

\title[Partitions of unity and barycentric algebras]{Partitions of unity and barycentric algebras}

\author[A. Zamojska-Dzienio]{Anna Zamojska-Dzienio}
\address{Faculty of Mathematics and Information Science\\
Warsaw University of Technology\\
00-661 Warsaw, Poland}

\email{anna.zamojska@pw.edu.pl}

\subjclass{51M20, 52A01, 52B99}
\keywords{Barycentric coordinate, Partition of unity, Lagrange property, Tautological map, Barycentric algebra}

\begin{abstract}
Barycentric coordinates provide solutions to the problem of expressing an element of a compact convex set as a convex combination of a finite number of extreme points of the set. They have been studied widely within the geometric literature, typically in response to the demands of interpolation, numerical analysis and computer graphics. In this note we bring an algebraic perspective to the problem, based on \emph{barycentric algebras}. We focus on the discussion of relations between different subclasses of partitions of unity, one arising in the context of barycentric coordinates, based on the \emph{tautological map} introduced by Guessab.
\end{abstract}

\maketitle

\section{Introduction}
Let $k$, $n$ be positive integers such that $k< n$. Consider a convex polytope $\Pi$ in $\mathbb{R}^k$ with vertex set $\mathbf v_1,\dots,\mathbf v_n$. Then, each element $\mathbf v$ of $\Pi$ may be expressed as a convex combination
\begin{align}\label{E:barcoord}
\mathbf{v}&=\lambda_1\mathbf v_1+\ldots+\lambda_n\mathbf v_n
\\ \label{E:convcomb1}
\mbox{with}\quad
1&=\lambda_1+\ldots+\lambda_n
\end{align}
and $\lambda_i \in [0,1]\subseteq\mathbb{R}$. If $\mathbf v$ and $\mathbf v_i$ are given by Cartesian coordinates in the vector space $\mathbb{R}^k$, the coefficients $\lambda_i$, called \emph{barycentric coordinates}, may be calculated by solving the system \eqref{E:barcoord},\eqref{E:convcomb1} of $k+1$ linear equations in the $n$ non-negative unknowns $\lambda_1,\dots,\lambda_n$. If $\Pi$ is not a simplex, then $k+1<n$, so the system is underdetermined, and the barycentric coordinates in \eqref{E:barcoord} are not uniquely specified.

The following problem appears in many applications of convex polytopes, for example in geometric modeling and computer graphics (see e.g. ~\cite{FloaterGBCA, FHK} and their references).

\begin{problem}\label{P:problem}
Given the set $V$ of vertices $\mathbf v_i$ of a convex polytope $\Pi$, find a uniform system to produce uniquely determined barycentric coordinates for any point $\mathbf v$ of $\Pi$.
\end{problem}

In \cite{RSZ1, RSZ2} the authors brought an algebraic perspective to the problem. They developed a general framework for the study of barycentric
coordinate systems on a given convex polytope, founded on the theory of \emph{barycentric algebras} introduced in the nineteen-fifties independently by M.H. Stone \cite{Stone} and H. Kneser \cite{Kneser} for the axiomatization of real convex sets.

The set of all barycentric coordinate systems on a convex polytope itself forms a cancellative barycentric algebra (a \emph{convex set}), which then becomes an object of study in its own right | compare \cite[Section 6]{RSZ1}. A barycentric coordinate system is usually characterized by two properties: the ``partition of unity'' property (understood here as a partition of the constant function $\mathds 1_\Pi$ where the output value is $1$) and the ``linear precision'' property (a partition of the identity function $1_\Pi$ on $\Pi$). Use of barycentric algebra shows that, in our context, the partition of unity property is actually a consequence of the linear precision property that does not need to be specified separately \cite[Lemma~2.23, Remark~6.4]{RSZ1}.

This article serves two purposes. The first is to give an introduction for readers who are not familiar with the theory of barycentric algebras (Sections~\ref{S:rba}--\ref{S:pcs}). The second is to investigate links between different subclasses of partitions of unity (Section \ref{S:tm}), giving an algebraic interpretation to results obtained by Guessab \cite{G}. In particular, we obtain an alternative proof that the set of all barycentric coordinate systems on a convex polytope forms a convex set (Corollary~\ref{C:KPi}).

The paper is self-contained, but for further reading in the background of universal algebra we recommend \cite{B, BS}, and for barycentric algebra theory \cite{RS85,Sm11,Modes}.

\section{Real barycentric algebras}\label{S:rba}
\subsection{The basic definition}
Denote by $I$ the closed real unit interval $[0,1]$ and by $I^{\circ}=I-\{0,1\}=]0,1[$ the open real unit interval.
For $p,r\in I^{\circ}$ define:
\begin{enumerate}
\item[$(\mathrm a)$] \emph{complementation}: $\overline{r}=1-r$;
\item[$(\mathrm b)$] \emph{dual multiplication}: $p\circ r=p+r-p\cdot r= \overline{(\overline p\cdot \overline r)}$.
\end{enumerate}
Note that the values $\overline{r}$ and $p\circ r$ are again in $I^{\circ}$.

\begin{definition}\cite{RS85,Sm11,Modes}
A \emph{(real) barycentric algebra} $\mathcal{A}=(A,\{\underline{p}\mid p\in I^{\circ}\})$ is defined as a 
set $A$ that is equipped with binary \emph{operations}
\begin{equation}\label{E:OpOfBaAl}
\underline{p}\colon A\times A\to A;(a,b)\mapsto \underline{p}(a,b)
\end{equation}
for each element $p\in I^{\circ}$. The operations \eqref{E:OpOfBaAl} are required to satisfy the properties of \emph{idempotence}
\begin{equation}\label{E:idemptnc}
\underline{p}(a,a)=a
\end{equation}
for $a$ in $A$, \emph{skew-commutativity}
\begin{equation}\label{E:skewcomm}
\underline{p}(a,b)=\underline{\overline{p}}(b,a)
\end{equation}
for $a$, $b$ in $A$, and \emph{skew-associativity}
\begin{equation}\label{E:skewasoc}
\underline{p}(\underline{r}(a,b),c)=\underline{r\circ p}(a,\underline{p/(r\circ p)}(b,c))
\end{equation}
for $a$, $b$, $c$ in $A$.
\end{definition}

\begin{notation}
In what follows, we denote the set $\{\underline{p}\mid p\in I^{\circ}\}$ of operations briefly by $\underline{I}^{\circ}$, and a real barycentric algebra by $\mathcal{A}=(A,\underline{I}^{\circ})$.
\end{notation}

Summarizing, a real barycentric algebras $\mathcal{A}=(A,\underline{I}^{\circ})$ is an algebraic structure (briefly: algebra) with continuum many binary operations that are indexed by $I^\circ$. All these operations satisfy conditions \eqref{E:idemptnc}-\eqref{E:skewasoc} for all elements of $A$. Since we are interested here only in the case $I^\circ\subset\mathbb{R}$, we will skip \emph{real} in the name and simply write \emph{barycentric algebra} for $\mathcal{A}$.

\subsection{Homomorphisms, subalgebras, products}
By Birkhoff's HSP Theorem, the class $\mathbf{B}$ of barycentric algebras, as an equational class, forms a \emph{variety} of algebras, i.e., a class closed under the taking of homomorphic images, subalgebras and direct products.
\begin{definition}\label{D:BaHomSet}
Suppose that $\mathcal{A}=(A,\underline{I}^{\circ})$ and $\mathcal{A}'=(A',\underline{I}^{\circ})$ are barycentric algebras.\footnote{Note that operations in $\mathcal{A}$ and $\mathcal{A}'$ needn't be the same. Sometimes we add a superscript, writing $\underline{p}^{\mathcal{A}}$ to make it clear, but usually we leave it off whenever we can get away with it.}
A function $f\colon A\to A';x\mapsto f(x)$ is said to be a \emph{barycentric} (\emph{algebra}) \emph{homomorphism} if
\begin{equation}\label{E:BaryAHom}
f(\underline p^{\mathcal{A}}(a,b))=\underline p^{\mathcal{A}'}(f(a),f(b))
\end{equation}
for all $a,b\in A$ and $p\in I^\circ$.
A homomorphism is an \emph{isomorphism} if it is bijective. We say that two barycentric algebras are isomorphic when there exists an isomorphism between them.
\end{definition}
Informally stated, a barycentric homomorphism \emph{preserves the operations} of the barycentric algebras. It is well-known that it preserves not only basic binary operations $\underline{p}$ in $\underline{I}^{\circ}$ but also all \emph{derived} (or \emph{term}) operations obtained as compositions of the basic ones (see Section \ref{Ss:Coco} below). The composite of two composable barycentric homomorphism is again a barycentric homomorphism.

\begin{definition}\label{D:Sb}
Suppose that $\mathcal{A}=(A,\underline{I}^{\circ})$ is a barycentric algebra, and $B$ is a subset of $A$.
If
$$
\forall\ p\in I^\circ\,,\
a\in B
\mbox{ and }
b\in B
\
\Rightarrow
\
\underline p(a,b)\in B\,,
$$
then $\mathcal{B}=(B,\underline{I}^{\circ})$ is a \emph{subalgebra} of $\mathcal{A}$. We denote the subalgebra relationship by $\mathcal{B}\leq \mathcal{A}$.
\end{definition}

The intersection of any collection of subalgebras of a barycentric algebra $\mathcal{A}$ is again a subalgebra of $\mathcal{A}$.

\begin{example}
\begin{enumerate}
\item[$(\mathrm a)$] Due to idempotence \eqref{E:idemptnc}, any singleton in $\mathcal{A}\in\mathbf{B}$ is a subalgebra of $\mathcal{A}$.
\item[$(\mathrm b)$] For a barycentric homomorphism $f\colon \mathcal{A}\to\mathcal{A}'$, the image of any subalgebra of $\mathcal{A}$ with respect to $f$ is a subalgebra in $\mathcal{A}'$, and similarly the preimage of any subalgebra of $\mathcal{A}'$ with respect to $f$ is a subalgebra in $\mathcal{A}$.
\end{enumerate}
\end{example}

\begin{definition}\label{D:Sbgen}
Let $\mathcal{A}$ be a barycentric algebra, with a subset $S\subseteq A$. Then the \emph{subalgebra} $\braket S$ \emph{generated} by $S$ is the intersection of all the subalgebras of $\mathcal{A}$ that contain $S$.
\end{definition}

\begin{proposition}\cite[Proposition 123]{RS85}\label{P:hs}
Let $\mathcal{A}$, $\mathcal{A}'$ be barycentric algebras, and let $S\subseteq A$. Then a barycentric homomorphism $f\colon \braket S\to \mathcal{A}'$ is specified uniquely by its restriction $f\colon S\to A'$ to $S$.
\end{proposition}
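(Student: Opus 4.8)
The plan is to prove this by the standard ``agreement set'' argument of universal algebra, using nothing beyond Definition~\ref{D:Sbgen} and the fact (noted just before it) that an intersection of subalgebras is again a subalgebra. Concretely, it suffices to show that any two barycentric homomorphisms $g,h\colon\braket S\to\mathcal A'$ with $g|_S=h|_S$ must coincide on all of $\braket S$; the phrase ``specified uniquely by its restriction to $S$'' is precisely this uniqueness assertion.

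First I would set $E=\{x\in\braket S\mid g(x)=h(x)\}$ and observe that $S\subseteq E$ by hypothesis. Next I would verify that $E$ is a subalgebra of $\mathcal A$ (contained in $\braket S$): if $a,b\in E$ and $p\in I^\circ$, then two applications of the homomorphism identity \eqref{E:BaryAHom} give
\[
g\bigl(\underline p(a,b)\bigr)=\underline p\bigl(g(a),g(b)\bigr)=\underline p\bigl(h(a),h(b)\bigr)=h\bigl(\underline p(a,b)\bigr),
\]
so $\underline p(a,b)\in E$, and the closure condition of Definition~\ref{D:Sb} holds. Since $\braket S$ is, by Definition~\ref{D:Sbgen}, contained in every subalgebra of $\mathcal A$ that contains $S$, it follows that $\braket S\subseteq E$; hence $E=\braket S$ and $g=h$, which is the claim.

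I do not expect a real obstacle here: the only point requiring care is that $E$ must be closed under \emph{all} the operations $\underline p$, $p\in I^\circ$, at once, and the computation above handles this uniformly in $p$. (Alternatively, one could invoke the remark after Definition~\ref{D:BaHomSet} that homomorphisms preserve every derived/term operation, together with the description of $\braket S$ as the set of values of term operations on tuples from $S$; but establishing that description needs its own short induction, so the agreement-set route is the shorter one.) One should also dispose of the degenerate case $S=\emptyset$ separately, according to whatever convention is in force for empty subalgebras, in which case the statement is vacuous or immediate.
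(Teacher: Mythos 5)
Your argument is correct: the agreement-set $E=\{x\in\braket S\mid g(x)=h(x)\}$ is closed under every $\underline p$, $p\in I^\circ$, by the computation you give, contains $S$, and hence contains $\braket S$ by the minimality in Definition~\ref{D:Sbgen}, which is exactly the uniqueness claim. The paper itself supplies no proof of this proposition (it is quoted from the reference as Proposition 123 of Romanowska--Smith), so there is nothing to compare against; your route is the standard one and needs nothing beyond Definitions~\ref{D:Sb} and \ref{D:Sbgen} and the homomorphism identity \eqref{E:BaryAHom}, with the $S=\emptyset$ case correctly noted as degenerate.
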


\begin{definition}\label{D:DiPr}
Suppose $\mathcal{A}=(A,\underline{I}^{\circ})$, $\mathcal{A}'=(A',\underline{I}^{\circ})$ are barycentric algebras. Then the \emph{(direct) product} $\mathcal{A}\times\mathcal{A}'$ is the algebra defined on the set $A\times A'$ with componentwise structure
\begin{equation*}
\underline p^{\mathcal{A}\times\mathcal{A}'}((a,a'),(b,b'))=\left(\underline p^\mathcal{A}(a,b)\,,\underline p^{\mathcal{A}'}(a',b')\right)
\end{equation*}
for all $p\in I^\circ$.
\end{definition}

Iterated products and powers are defined in the obvious way, since the construction of $\mathcal{A}\times\mathcal{A}'$ extends naturally to a direct product of a finite number of factors.

\subsection{Convex sets}
The variety $\mathbf{B}$ of barycentric algebras is generated by its \emph{cancellative} members  barycentric algebra which satisfy the additional property of \emph{cancellativity}:
$$
\underline{p}(a,b)=\underline{p}(a,c) \mathrel{\Rightarrow} b=c
$$
for all operations $\underline{p}$ of $\underline{I}^{\circ}$ and all elements $a$, $b$, $c$ in $A$. Cancellative barycentric algebras are called \emph{convex sets}.

The class $\mathbf{C}$ of (algebras isomorphic to) convex sets considered as algebras $(C,\underline{I}^{\circ})$ is closed under the formation of subalgebras and direct products, but is not closed under homomorphic images. Thus, it is not a variety.

\begin{remark}\label{D:BarycAlg}
A barycentric algebra can be defined as a homomorphic image of a convex set.
\end{remark}

\subsection{Barycentric algebras of functions}

Suppose that $X$ is a set, and that $\mathcal{A}'=(A',\underline{I}^{\circ})$ is a barycentric algebra.
We write $\mathbf{Set}(X,\mathcal{A}')$ for the set of all functions from $X$ to $A'$.
For $p\in I^\circ$, define the \emph{pointwise} or \emph{componentwise} operation
\begin{equation}\label{E:pontwise}
\underline p(f,g)\colon X\to A';x\mapsto \underline p(f(x),g(x))
\end{equation}
on elements $f,g$ of $\mathbf{Set}(X,\mathcal{A}')$.
The set $\mathbf{Set}(X,\mathcal{A}')$ becomes a barycentric algebra under the pointwise operations \eqref{E:pontwise}. 

For $\mathcal{B}\leq\mathcal{A}'$ one obtains $\mathbf{Set}(X,\mathcal{B})\leq\mathbf{Set}(X,\mathcal{A}')$. Moreover, if $\mathcal{A}'\in\mathbf{C}$ then $\mathbf{Set}(X,\mathcal{A}')\in\mathbf{C}$.

Suppose now that $\mathcal{A}=(A,\underline{I}^{\circ})$ and $\mathcal{A}'=(A',\underline{I}^{\circ})$ are barycentric algebras. Write $\mathbf B(\mathcal{A},\mathcal{A}')$ for the set of all barycentric homomorphisms from $\mathcal{A}$ to $\mathcal{A}'$. The set $\mathbf B(\mathcal{A},\mathcal{A}')$ becomes a barycentric algebra under the pointwise operations \eqref{E:pontwise}, a subalgebra of the barycentric algebra $\mathbf{Set}(A,\mathcal{A}')$.

\subsection{Basic examples}

For our purposes the basic examples of a bary\-centric algebra, i.e. constructed from real vector spaces, are crucial.

\begin{example}\label{Ex:VectSpace}
Let $\mathcal{V}$ be a vector space over $\mathbb{R}$. Let $\underline{p}\colon \mathcal{V}\times \mathcal{V}\rightarrow \mathcal{V}$ for $p\in I^{\circ}$ be the \emph{weighted mean operation}:
\begin{equation}
\underline{p}(u,v)=(1-p)\cdot u+p\cdot v.
\end{equation}
We obtain the (cancellative) barycentric algebra $(\mathcal{V},\underline{I}^{\circ})$. Now, consider its subalgebras, i.e. (cancellative) barycentric algebras. Obviously, they are the convex subsets of the real vector space $\mathcal{V}$.
\end{example}

As algebras, \emph{convex polytopes} are defined as finitely generated convex sets. The minimal set of generators of a polytope is the set of its vertices (i.e., its extreme points). In geometric terminology, the convex set generated by a set $V$ is its \emph{convex hull}. If a $k$-dimensional polytope has $n$ vertices, then $n$ is at least $k+1$.

One can treat a convex polytope $\Pi$ in $\mathbb{R}^k$ as a cancellative barycentric algebra $(\Pi,\underline{I}^{\circ})$, a subalgebra of $(\mathbb{R}^k,\underline{I}^{\circ})\in\mathbf{C}$ constructed as described in Example \ref{Ex:VectSpace}. The set of vertices $V$ of $\Pi$ is the generating set of the algebra $(\Pi,\underline{I}^{\circ})$, i.e. $(\Pi,\underline{I}^{\circ})$ is the smallest cancellative barycentric algebra which contains $V$. In other words, any element $\mathbf{p}\in\Pi$ can be obtained by applying finitely many times finitely many operations from $\underline{I}^{\circ}$ to finitely many elements in $V$. Derived operations in $\Pi$ are then just convex combinations of some vertices, and thus barycentric homomorphisms preserve them. Throughout the further parts of the paper it will be assumed that $V$ is the counter-clockwise ordered set of elements $\mathbf v_1 <\dots <\mathbf v_n$.

\begin{example}
Note also that $\mathcal{I}=(I,\underline{I}^{\circ})$ is a convex set, a subalgebra of $(\mathbb{R},\underline{I}^{\circ})$. It provides a host of other examples, starting from the direct product (power) of $n$ copies of $\mathcal{I}$ for any positive integer $n$. Likewise, any space of functions with codomain $\mathcal{I}^n$, for $n\in\mathbb{N}^+$, is a convex set under the pointwise operations \eqref{E:pontwise}.
\end{example}

\subsection{Convex combinations}\label{Ss:Coco}
Let $\Pi$ be a convex set in $\mathbb{R}^k$ treated geometrically. Consider a point $\mathbf{a}\in \Pi\smallsetminus V$ presented as a convex combination $\sum_{i=1}^{r} \alpha_i\mathbf{v}'_i$ with coefficients in $I^\circ$. Here, $\{\mathbf{v}'_1<\ldots<\mathbf{v}'_r\}$ is a potentially proper subset of $V$, with the order inherited from $V$. Then $\mathbf{a}$ can be presented as a composition of the following basic operations in $\underline{I}^{\circ}$: $\underline{p_{r-1}}\left(\underline{p_{r-2}}(\ldots (\underline{p_{1}}(\mathbf{v}'_1,\mathbf{v}'_2),\ldots ,\mathbf{v}'_{r-1}),\mathbf{v}'_r \right)$ with $p_i={\alpha_{i+1}}\big/{\sum_{k=1}^{i+1}\alpha_{k}}$ for $1\leq i\leq r-1$. Now, suppose that we start from a convex set $(\Pi,\underline{I}^{\circ})$, a subalgebra of $(\mathbb{R}^k,\underline{I}^{\circ})$, and
$$
\mathbf{a}=\underline{p_{r-1}}\left(\underline{p_{r-2}}(\ldots (\underline{p_{1}}(\mathbf{v}'_1,\mathbf{v}'_2),\ldots ,\mathbf{v}'_{r-1}),\mathbf{v}'_r \right).
$$
Then $\mathbf{a}$ can be presented as a convex combination $\sum_{i=1}^{r} \alpha_i\mathbf{v}'_i$ with coefficients $\alpha_1=\prod_{k=1}^{r-1}\overline{p_{k}}$, $\alpha_i=p_{i-1}\cdot \prod_{k=i}^{r-1}\overline{p_{k}}$ for $2\leq i\leq r-1$, and $\alpha_r=p_{r-1}$.

\section{Polytope coordinate systems}\label{S:pcs}

\subsection{Barycentric coordinate systems on convex polytopes}\label{Ss:gbc}

Let $\Pi\subseteq\mathbb{R}^k$ be a convex polytope presented as the convex hull of the counter-clockwise ordered sequence $\mathbf v_1,\dots,\mathbf v_n$ of extreme points located around its boundary.

\begin{definition}\cite{FloaterGBCA, WarSchHirDes}
A \emph{barycentric coordinate system} with respect to $\Pi$ is a set $\{b_i\colon\Pi\rightarrow I\mid i=1,\ldots ,n\}$ of functions such that for each point $\mathbf v$ of $\Pi$ the following conditions are satisfied:
\begin{enumerate}
\item[$(\mathrm a)$] \emph{partition of unity:} $\sum_{i=1}^{n}b_i(\mathbf v)=1$;
\item[$(\mathrm b)$] \emph{linear precision:} $\sum_{i=1}^{n}b_i(\mathbf v)\mathbf v_i=\mathbf v$.
\end{enumerate}
\end{definition}

This exactly means that each point of a polygon $\Pi$ can be expressed as a convex combination of the vertices with coefficients $b_i(\mathbf v)$. The idea goes back to A.F.~M\"{o}bius \cite{Moebius} who introduced barycentric coordinates for triangles. They have been generalized in several ways to arbitrary polygons, polyhedra, higher dimensional polytopes, and curves, due to their role in approximation theory and numerical analysis \cite{HS17}.

\begin{remark}
\begin{itemize}
\item[$(\mathrm i)$] Some authors add the \emph{Lagrange property}:
\begin{equation}\label{E:LP}
b_i(\mathbf v_j)=\delta_{ij},
\end{equation}
to the definition of a barycentric coordinate system, where $\delta_{ij}$ is the Kronecker delta. But since here we are considering only convex polytopes, where the $\mathbf v_i$ for $i=1,\ldots n$ are the extreme points of $\Pi$ or, in algebraic language, the \emph{irredundant generators} of the algebra $\Pi$, the condition \eqref{E:LP} is satisfied automatically.
\item[$(\mathrm{ii})$] Another common requirement imposed on the functions $b_i$ for $i=1,\ldots n$ is that of \emph{continuity}. For example, the results in \cite{G} are formulated for continuous functions. We do not make such an assumption here. However, continuity is preserved by convex combinations, so continuous functions form \emph{subalgebras} of the algebras considered in subsequent sections.
\end{itemize}
\end{remark}

\subsection{The convex set of coordinate systems}

\begin{example}\label{Ex:fspace}
Let $\Pi\subseteq \mathbb{R}^k$ be a polytope with an ordered vertex set $V=\set{\mathbf v_1<\dots<\mathbf v_n}$. Consider the real interval $\mathcal{I}=(I,\underline{I}^{\circ})$ as a convex set with the weighted mean operations (see Example~\ref{Ex:VectSpace}). The space $\mathbf{Set}(\Pi,\mathcal{I})$ is a convex set (cancellative barycentric algebra) $(\mathbf{Set}(\Pi,\mathcal{I}),\underline{I}^{\circ})$. We can apply the construction above again, now to a vertex set $V$ of $\Pi$ and $(\mathbf{Set}(\Pi,\mathcal{I}),\underline{I}^{\circ})\in \mathbf{C}$ and obtain a new convex set $(\mathbf{Set}(V,\mathbf{Set}(\Pi,\mathcal{I})),\underline{I}^{\circ})$. The latter is isomorphic to $(\mathbf{Set}(\Pi\times V,\mathcal{I}),\underline{I}^{\circ})\in\mathbf{C}$ by the Currying isomorphism 
$$
\iota\colon \mathbf{Set}(\Pi\times V,\mathcal{I})\rightarrow \mathbf{Set}(V,\mathbf{Set}(\Pi,\mathcal{I}))
$$
taking a function
$$
h\colon \Pi\times V\rightarrow I;\;(\mathbf x,\mathbf v)\mapsto h(\mathbf x,\mathbf v);
$$
to a family of functions $(h_{\mathbf v})_{\mathbf{v}\in V}$ with $h_\mathbf{v}\colon \Pi\rightarrow I;\;\mathbf{x}\mapsto h(\mathbf{x},\mathbf{v})$. Hence, we can identify the barycentric algebras $\mathbf{Set}(V,\mathbf{Set}(\Pi,\mathcal{I}))$ and $\mathbf{Set}(\Pi\times V,\mathcal{I})$.
\end{example}

\begin{definition}\cite[Definition 6.3]{RSZ1}
Let $\Pi$ be a polytope with a vertex set $V=\set{\mathbf v_1,\dots,\mathbf v_n}$.  A \emph{coordinate system} for $\Pi$ is a map
\begin{equation}
\lambda\colon V\to \mathbf{Set}(\Pi,\mathcal{I});\mathbf v\mapsto\lambda_{\mathbf v}
\end{equation}
such that $\mathbf{a}=\sum_{\mathbf{v}\in V}\lambda_{\mathbf{v}}(\mathbf{a})\mathbf{v}_i$, i.e., the linear precision property holds, for all $\mathbf{a}\in\Pi$.
\end{definition}

\begin{remark}
In our algebraic setting, the partition of unity property, namely $\sum_{\mathbf{v}\in V}\lambda_{\mathbf{v}}(\mathbf{a})=1$, follows from the linear precision property \cite[Lemma 2.23]{RSZ1}. Indeed, let $\mathbf{a}\in\Pi$ and $\lambda_{\mathbf{v}}$, $\mathbf{v}\in V$. Take the constant function $\mathds 1_\Pi\colon \Pi\to \{1\}$. Note $\mathds 1_\Pi\in\mathbf{B}(\Pi,\mathcal{I})$, since $\{1\}\leq \mathcal{I}$. Hence,
\begin{align*}
\mathds 1_\Pi(\mathbf{a})=\mathds 1_\Pi\left(\sum_{\mathbf{v}\in V}\lambda_{\mathbf{v}}(\mathbf{a})\mathbf{v}_i\right)=\sum_{\mathbf{v}\in V}\lambda_{\mathbf{v}}(\mathbf{a})\mathds 1_\Pi(\mathbf{v}_i),
\end{align*}
and the partition of unity property follows.
\end{remark}

The set $K_\Pi$ of cooordinate systems on a polygon $\Pi$ is a subalgebra of $(\mathbf{Set}(\Pi\times V,\mathcal{I}),\underline{I}^{\circ})$ under pointwise barycentric operations \cite[Theorem 6.6]{RSZ1}. It forms a convex set $(K_\Pi,\underline{I}^{\circ})$. This raises further questions on the structure of the set. In particular,

\begin{question}
What are the extreme points (irredundant generators) of the convex set $(K_\Pi,\underline{I}^{\circ})$?
\end{question}

\begin{example}\label{L:Kalman}
Let $\Pi$ be a convex polytope in $\mathbb{R}^k$ understood as a convex set $(\Pi,\underline{I}^\circ)$ finitely generated by a set $V=\{\mathbf v_1,\ldots \mathbf v_n\}$ of vertices. Then each element $\mathbf a\in \Pi$ can be represented as a convex combination of vertices. The combination is not unique if $\Pi$ is not a simplex, but choose any such combination $\mathbf a=\sum_{i=1}^na_i\mathbf v_i$. Now define functions $\lambda_i\in \mathbf{Set}(\Pi,\mathcal{I})$ by $\lambda_i(\mathbf a)=a_i$. Then $\lambda=(\lambda_1,\ldots \lambda_n)$ is a cooordinate system on $\Pi$.
\end{example}

\begin{remark}
Continuous versions of the fact shown in Example \ref{L:Kalman} that each element in $(\Pi,\underline{I}^\circ)$ can be represented by a (continuous) coordinate system in $K_\Pi$ can be found in \cite[Theorem~2]{K}, \cite[Theorem~9.8.1]{Modes}, \cite[Lemma 2.2]{G}.
\end{remark}

\section{The tautological map}\label{S:tm}

In \cite{G} links between barycentric coordinate systems and certain clas\-ses of partitions of unity on a given convex polytope $\Pi$ in $\mathbb{R}^k$ are discussed. Motivated by applications, \cite{G} focuses on continuous functions, but as discussed above, we will not make this assumption.
The aim of this section is to interpret the results of \cite{G} in the language of barycentric algebras and show how they can be obtained within this theory.

Let $\Pi$ be a convex polytope in $\mathbb{R}^k$ with $n$ vertices, considered as a barycentric algebra. 
A function $f\in\mathbf{Set}(\Pi,\mathcal{I}^n)$ will be now specified by the element $(f_1,\ldots ,f_n)$ of the pointwise barycentric algebra $\mathbf{Set}(\Pi,\mathcal{I})^n$.

By $\mathbf{Set}^{\mathds 1}(\Pi,\mathcal{I}^n)$ we denote the set of all functions from the set $\Pi$ to $\mathcal{I}^n\in\mathbf{B}$ which have the partition of unity property, i.e.
\begin{equation*}
f\in\mathbf{Set}^{\mathds 1}(\Pi,\mathcal{I}^n) \mathrel{\Leftrightarrow} f\in\mathbf{Set}(\Pi,\mathcal{I}^n) \text{ and } \sum_{i=1}^n f_i(a)=1 \text{ for all } a\in\Pi.
\end{equation*}

\begin{lemma}
$\mathbf{Set}^{\mathds 1}(\Pi,\mathcal{I}^n)$ is a subalgebra of the barycentric algebra $\mathbf{Set}(\Pi,\mathcal{I}^n)$.
\end{lemma}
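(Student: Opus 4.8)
The plan is to verify the subalgebra criterion of Definition~\ref{D:Sb}: given $f,g\in\mathbf{Set}^{\mathds 1}(\Pi,\mathcal{I}^n)$ and $p\in I^\circ$, I must show that $\underline p(f,g)$ again lies in $\mathbf{Set}^{\mathds 1}(\Pi,\mathcal{I}^n)$. Since $\mathbf{Set}^{\mathds 1}(\Pi,\mathcal{I}^n)$ is by definition a subset of $\mathbf{Set}(\Pi,\mathcal{I}^n)$, and the latter is already a barycentric algebra under the pointwise operations \eqref{E:pontwise}, the only thing to check is that the partition of unity property is inherited by the pointwise weighted mean. So fix $a\in\Pi$. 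Writing $f=(f_1,\dots,f_n)$ and $g=(g_1,\dots,g_n)$ as elements of $\mathbf{Set}(\Pi,\mathcal{I})^n$, the componentwise description of the operation on $\mathcal{I}^n$ gives $\underline p(f,g)(a)=\bigl((1-p)f_1(a)+pg_1(a),\dots,(1-p)f_n(a)+pg_n(a)\bigr)$.

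The key computation is then just
\begin{align*}
\sum_{i=1}^n\bigl((1-p)f_i(a)+pg_i(a)\bigr)
&=(1-p)\sum_{i=1}^n f_i(a)+p\sum_{i=1}^n g_i(a)\\
&=(1-p)\cdot 1+p\cdot 1=1,
\end{align*}
using linearity of the finite sum and the hypotheses $\sum_i f_i(a)=1$, $\sum_i g_i(a)=1$. One should also note that each coordinate $(1-p)f_i(a)+pg_i(a)$ lies in $I$, since $I$ is closed under weighted means (Example~\ref{Ex:VectSpace}), so $\underline p(f,g)$ genuinely maps $\Pi$ into $\mathcal{I}^n$; this is automatic from $\mathbf{Set}(\Pi,\mathcal{I}^n)$ being a barycentric algebra, but it is worth recording.

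There is no real obstacle here — the statement is essentially the observation that ``partition of unity'' is a closure condition cut out by a barycentric homomorphism. Indeed, an alternative and perhaps cleaner phrasing, which I would at least mention, is to consider the sum map $\sigma\colon\mathcal{I}^n\to\mathbb{R},\ (t_1,\dots,t_n)\mapsto t_1+\dots+t_n$; this is a barycentric homomorphism (it preserves weighted means by linearity), hence so is the induced map $\mathbf{Set}(\Pi,\mathcal{I}^n)\to\mathbf{Set}(\Pi,\mathbb{R})$ sending $f$ to $\sigma\circ f$, and $\mathbf{Set}^{\mathds 1}(\Pi,\mathcal{I}^n)$ is precisely the preimage of the singleton subalgebra $\{\mathds 1_\Pi\}\leq\mathbf{Set}(\Pi,\mathbb{R})$ under that homomorphism. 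By the Example following Definition~\ref{D:Sb}(b), preimages of subalgebras are subalgebras, which finishes it. I would present the direct computation as the main argument and add this homomorphism viewpoint as a remark, since it is the one that generalizes to the linear precision condition in the following results.
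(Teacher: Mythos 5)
Your main argument is essentially the paper's own proof: both use the componentwise description of $\underline p(f,g)$ and then the linearity of the finite sum to reduce to $(1-p)\cdot 1+p\cdot 1=1$ (the paper phrases this last step as idempotence, $\underline q(\mathds 1_\Pi,\mathds 1_\Pi)=\mathds 1_\Pi$). The supplementary remark realizing $\mathbf{Set}^{\mathds 1}(\Pi,\mathcal{I}^n)$ as the preimage of the singleton subalgebra $\{\mathds 1_\Pi\}$ under the sum homomorphism is a correct and pleasant alternative, consonant with how the paper later proves Corollary~\ref{C:KPi}, but the core proof is the same.
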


\begin{proof}
For $f,g\in\mathbf{Set}^{\mathds 1}(\Pi,\mathcal{I}^n)$ and $q\in I^\circ$, one has
\begin{align*}
\underline{q}(f,g)&=\underline{q}\left((f_1,f_2,\ldots ,f_n),(g_1,g_2,\ldots g_n)\right)\\
&=\left(\underline{q}(f_1,g_1),\underline{q}(f_2,g_2),\ldots \underline{q}(f_n,g_n)\right).
\end{align*}
Then
\begin{align*}
\sum_{i=1}^{n}\underline{q}(f_i,g_i)=\underline{q}\left(\sum_{i=1}^{n}f_i,\sum_{i=1}^{n}g_i\right)=\underline{q}(\mathds 1_\Pi,\mathds 1_\Pi)=\mathds 1_\Pi.
\end{align*}
\end{proof}

In a similar way, one can show that $\mathbf{Set}_{LP}^{\mathds 1}(\Pi,\mathcal{I}^n)$, consisting of all partitions of unity with the Lagrange property, also forms a convex set. Hence, we obtain the following sequence of subalgebras in $\mathbf{C}$:
\begin{equation}\label{E:sequence}
\mathbf{Set}_{LP}^{\mathds 1}(\Pi,\mathcal{I}^n)\leq \mathbf{Set}^{\mathds 1}(\Pi,\mathcal{I}^n)\leq \mathbf{Set}(\Pi,\mathcal{I}^n).
\end{equation}

\begin{definition}\cite[(2.6)]{G}
The mapping
\begin{align*}
T\colon \mathbf{Set}^{\mathds 1}(\Pi,\mathcal{I}^n)&\to \mathbf{Set}(\Pi,\mathbb{R}^k);\\
f&\mapsto \left(T_f\colon \Pi\to \mathbb{R}^k;\mathbf{a}\mapsto\sum_{i=1}^nf_i(\mathbf{a})\mathbf{v}_i\right)
\end{align*}
is called the \emph{tautological map}.
\end{definition}
\begin{lemma}
The tautological map $T$ is a barycentric homomorhism.
\end{lemma}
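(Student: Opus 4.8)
The plan is to verify the defining identity of a barycentric homomorphism,
$$
T\bigl(\underline q(f,g)\bigr)=\underline q\bigl(T(f),T(g)\bigr)\quad\text{for all }q\in I^\circ,
$$
directly and pointwise, using that the codomain $\mathbf{Set}(\Pi,\mathbb{R}^k)$ carries the pointwise barycentric structure induced by the weighted mean operations on $\mathbb{R}^k$ (Example~\ref{Ex:VectSpace}).

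First I would fix $f=(f_1,\dots,f_n)$ and $g=(g_1,\dots,g_n)$ in $\mathbf{Set}^{\mathds 1}(\Pi,\mathcal{I}^n)$ and $q\in I^\circ$, and recall (as in the proof of the previous lemma) that $\underline q(f,g)=\bigl(\underline q(f_1,g_1),\dots,\underline q(f_n,g_n)\bigr)$, where each $\underline q(f_i,g_i)\colon\Pi\to I$ is the pointwise weighted mean $\mathbf a\mapsto(1-q)f_i(\mathbf a)+q\,g_i(\mathbf a)$. Applying the definition of $T$ and expanding, for every $\mathbf a\in\Pi$ one gets $T_{\underline q(f,g)}(\mathbf a)=\sum_{i=1}^n\bigl((1-q)f_i(\mathbf a)+q\,g_i(\mathbf a)\bigr)\mathbf v_i$; distributing the scalars and splitting the sum, this equals $(1-q)\sum_i f_i(\mathbf a)\mathbf v_i+q\sum_i g_i(\mathbf a)\mathbf v_i=(1-q)T_f(\mathbf a)+q\,T_g(\mathbf a)=\underline q(T_f,T_g)(\mathbf a)$. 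Since this holds for all $\mathbf a\in\Pi$, the displayed identity follows.

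A more conceptual organization of the same argument, which I would also mention, is to observe that the affine map $\varphi\colon\mathcal{I}^n\to\mathbb{R}^k$, $(\lambda_1,\dots,\lambda_n)\mapsto\sum_i\lambda_i\mathbf v_i$, is a barycentric homomorphism, being the restriction of a linear map between the barycentric algebras $\mathbb{R}^n$ and $\mathbb{R}^k$ of Example~\ref{Ex:VectSpace}, which manifestly preserves weighted means. Then $T$ is simply post-composition with $\varphi$, i.e. $T(f)=\varphi\circ f$ once $\mathbf{Set}(\Pi,\mathcal{I}^n)$ is identified with $\mathbf{Set}(\Pi,\mathcal{I})^n$; and post-composition with a barycentric homomorphism is always a barycentric homomorphism on pointwise function algebras, restricting to the subalgebra $\mathbf{Set}^{\mathds 1}(\Pi,\mathcal{I}^n)$.

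There is essentially no obstacle here. The only points requiring a little care are the bookkeeping identification of $\mathbf{Set}(\Pi,\mathcal{I})^n$ with $\mathbf{Set}(\Pi,\mathcal{I}^n)$ (so that the pointwise operation on tuples is exactly the one used above) and the elementary observation that a linear, hence affine, coordinate map $\mathbb{R}^n\to\mathbb{R}^k$ preserves weighted means. Note, finally, that the partition-of-unity hypothesis on the domain plays no role in the homomorphism property itself; it only guarantees, incidentally, that each $T_f$ in fact takes values in $\Pi$.
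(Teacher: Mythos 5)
Your direct computation is essentially identical to the paper's proof: both verify $T(\underline q(f,g))=\underline q(T(f),T(g))$ pointwise by distributing the scalars $\mathbf v_i$ through the weighted mean and splitting the sum, the only difference being that you write the operation out as $(1-q)f_i(\mathbf a)+q\,g_i(\mathbf a)$ where the paper keeps the $\underline q$ notation. Your closing observations --- that $T$ is post-composition with the barycentric homomorphism $(\lambda_1,\dots,\lambda_n)\mapsto\sum_i\lambda_i\mathbf v_i$, and that the partition-of-unity hypothesis is irrelevant to the homomorphism property --- are correct and a pleasant conceptual supplement, but the substance of the argument is the same.
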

\begin{proof}
For $f,g\in \mathbf{Set}^{\mathds 1}(\Pi,\mathcal{I}^n)$, $q\in I^\circ$ and $\mathbf{a}\in \Pi$, one has
\begin{align*}
T\left(\underline{q}(f,g)\right)(\mathbf{a})&=\sum_{i=1}^n\underline{q}(f_i,g_i)(\mathbf{a})\mathbf{v}_i=\sum_{i=1}^n\underline{q}(f_i(\mathbf{a})\mathbf{v}_i,g_i(\mathbf{a})\mathbf{v}_i)\\
&=\underline{q}(\sum_{i=1}^nf_i(\mathbf{a})\mathbf{v}_i,\sum_{i=1}^ng_i(\mathbf{a})\mathbf{v}_i)=\underline{q}\left(T(f),T(g)\right)(\mathbf{a}).
\end{align*}
\end{proof}
\begin{corollary}\cite[Theorem 6.6]{RSZ1}\label{C:KPi}
The set $K_\Pi$ of cooordinate systems on a polytope $\Pi$ with vertex set $V$ forms a convex subset of $\mathbf{Set}^{\mathds 1}(\Pi,\mathcal{I}^n)$ under pointwise barycentric operations.
\end{corollary}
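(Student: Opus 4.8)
The plan is to exhibit $K_\Pi$ as the preimage, under the tautological homomorphism $T$, of a one-element subalgebra of $\mathbf{Set}(\Pi,\mathbb R^k)$, and then to invoke the closure properties already at hand.

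First I would make explicit the identification that places $K_\Pi$ inside $\mathbf{Set}^{\mathds 1}(\Pi,\mathcal{I}^n)$. By the definition taken from \cite{RSZ1}, a coordinate system on $\Pi$ is a map $\lambda\colon V\to\mathbf{Set}(\Pi,\mathcal{I})$; composing the Currying isomorphism $\iota$ of Example~\ref{Ex:fspace} with the evident identifications $\mathbf{Set}(\Pi\times V,\mathcal{I})\cong\mathbf{Set}(\Pi,\mathcal{I})^n\cong\mathbf{Set}(\Pi,\mathcal{I}^n)$ — all isomorphisms of barycentric algebras for the pointwise operations, since $|V|=n$ — such a $\lambda$ corresponds to the tuple $f=(\lambda_{\mathbf v_1},\dots,\lambda_{\mathbf v_n})=(f_1,\dots,f_n)$. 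Its defining linear precision condition becomes $\sum_{i=1}^n f_i(\mathbf a)\mathbf v_i=\mathbf a$ for all $\mathbf a\in\Pi$, and, as recalled in the Remark following that definition, the partition of unity property $\sum_i f_i(\mathbf a)=1$ is then automatic, so $f\in\mathbf{Set}^{\mathds 1}(\Pi,\mathcal{I}^n)$. Thus, under this identification, $K_\Pi=\{\,f\in\mathbf{Set}^{\mathds 1}(\Pi,\mathcal{I}^n)\mid \sum_{i=1}^n f_i(\mathbf a)\mathbf v_i=\mathbf a\ \text{for all}\ \mathbf a\in\Pi\,\}$.

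Next I would observe that the inclusion $j\colon\Pi\hookrightarrow\mathbb R^k$ is an element of the barycentric algebra $\mathbf{Set}(\Pi,\mathbb R^k)$ (indeed, since $\Pi$ carries the restricted weighted-mean operations, $j$ preserves them and so lies in the subalgebra $\mathbf B((\Pi,\underline I^\circ),(\mathbb R^k,\underline I^\circ))$). By idempotence, the singleton $\{j\}$ is a subalgebra of $\mathbf{Set}(\Pi,\mathbb R^k)$. Now the crux is merely to read off the definition of $T$: for $f\in\mathbf{Set}^{\mathds 1}(\Pi,\mathcal{I}^n)$ one has $T_f(\mathbf a)=\sum_{i=1}^n f_i(\mathbf a)\mathbf v_i$, so $T(f)=j$ precisely when $f$ satisfies linear precision, i.e. precisely when $f\in K_\Pi$. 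Hence $K_\Pi=T^{-1}(\{j\})$. Since $T$ is a barycentric homomorphism (preceding Lemma) and the preimage of a subalgebra under a homomorphism is again a subalgebra (Example following Definition~\ref{D:Sb}), $K_\Pi$ is a subalgebra of $\mathbf{Set}^{\mathds 1}(\Pi,\mathcal{I}^n)$. Finally $\mathbf{Set}^{\mathds 1}(\Pi,\mathcal{I}^n)$ is itself a convex set — it is a subalgebra of $\mathbf{Set}(\Pi,\mathcal{I}^n)$, which lies in $\mathbf C$ because $\mathcal{I}^n\in\mathbf C$ — and $\mathbf C$ is closed under subalgebras, so $K_\Pi$ is a convex set, as claimed.

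I do not anticipate a genuine obstacle: the mathematical content is carried entirely by the two structural facts already available, namely that $T$ is a homomorphism and that preimages of subalgebras under homomorphisms are subalgebras. The only step that needs a little care is the bookkeeping in the first paragraph — checking that the several Currying and product identifications are compatible with the pointwise barycentric structure, so that $K_\Pi$, originally defined in terms of maps $V\to\mathbf{Set}(\Pi,\mathcal{I})$, genuinely appears inside $\mathbf{Set}^{\mathds 1}(\Pi,\mathcal{I}^n)$ as the fibre $T^{-1}(\{j\})$.
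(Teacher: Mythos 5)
Your proof is correct and follows essentially the same route as the paper: both exhibit $K_\Pi$ as $T^{-1}(\{1_\Pi\})$ (your $j$ is the paper's $1_\Pi$), note that the singleton is a subalgebra by idempotence, and conclude via the facts that $T$ is a homomorphism, preimages of subalgebras are subalgebras, and $\mathbf C$ is closed under subalgebras. The extra bookkeeping you supply on the Currying identification is a welcome elaboration of what the paper leaves implicit, but it does not change the argument.
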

\begin{proof}
Due to idempotence, the singleton $\{1_\Pi\}$ is a subalgebra in the convex set $\mathbf{Set}(\Pi,\mathbb{R}^k)$. Its homomorphic preimage  $T^{-1}(\{1_\Pi\})$, as a subalgebra in the convex set $\mathbf{Set}^{\mathds 1}(\Pi,\mathcal{I}^n)$, is itself a convex set. Obviously, $T^{-1}(\{1_\Pi\})=K_\Pi$ (see also \cite[Proposition 2.1]{G}).
\end{proof}
In fact, direct calculations show that $K_\Pi\leq\mathbf{Set}_{LP}^{\mathds 1}(\Pi,\mathcal{I}^n)$ in the sequence \eqref{E:sequence}. The following lemma describes when the algebras coincide.

\begin{lemma}\cite[Theorem 2.3]{G}
Suppose $f\in T^{-1}\left(\mathbf{B}(\Pi,\mathbb{R}^k)\right)$. Then $f$ has the Lagrange property if and only if $f$ is a barycentric coordinate system.
\end{lemma}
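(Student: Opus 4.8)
The plan is to reformulate both halves of the equivalence in terms of the tautological image $T_f\colon\Pi\to\mathbb{R}^k$, and then to play the Lagrange property off against Proposition~\ref{P:hs}. By hypothesis $f\in T^{-1}\left(\mathbf{B}(\Pi,\mathbb{R}^k)\right)$, so $T_f$ is a barycentric homomorphism; and since $f$ already lies in $\mathbf{Set}^{\mathds 1}(\Pi,\mathcal{I}^n)$, it automatically satisfies the partition-of-unity condition, so the assertion ``$f$ is a barycentric coordinate system'' is equivalent to linear precision alone, i.e.\ to $T_f=1_\Pi$, i.e.\ to $f\in T^{-1}(\{1_\Pi\})=K_\Pi$. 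Thus everything reduces to proving $T_f=1_\Pi \iff f_i(\mathbf v_j)=\delta_{ij}$ for all $i,j$.

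For the implication from the Lagrange property to being a coordinate system, I would evaluate $T_f$ at each vertex: if $f_i(\mathbf v_j)=\delta_{ij}$, then $T_f(\mathbf v_j)=\sum_{i=1}^n\delta_{ij}\mathbf v_i=\mathbf v_j=1_\Pi(\mathbf v_j)$. Hence the two barycentric homomorphisms $T_f$ and $1_\Pi$ from $(\Pi,\underline{I}^\circ)=\braket V$ to $(\mathbb{R}^k,\underline{I}^\circ)$ agree on the generating set $V$, so by Proposition~\ref{P:hs} they coincide; therefore $T_f=1_\Pi$ and $f\in K_\Pi$. This is precisely the step where the standing hypothesis $f\in T^{-1}\left(\mathbf{B}(\Pi,\mathbb{R}^k)\right)$ is essential: without knowing that $T_f$ is a homomorphism, agreement on the vertices would not propagate to all of $\Pi$.

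For the converse, I would start from $T_f=1_\Pi$ and again specialize to a vertex, obtaining $\sum_{i=1}^n f_i(\mathbf v_j)\mathbf v_i=\mathbf v_j$ together with $\sum_{i=1}^n f_i(\mathbf v_j)=1$ and $f_i(\mathbf v_j)\in I$. Since $\mathbf v_j$ is an extreme point of $\Pi$ — equivalently, an irredundant generator of the algebra $(\Pi,\underline{I}^\circ)$ — it has no nontrivial representation as a convex combination of $\mathbf v_1,\dots,\mathbf v_n$, which forces $f_i(\mathbf v_j)=\delta_{ij}$; this is exactly the observation recorded in the remark following the definition of a barycentric coordinate system.

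I do not anticipate a real obstacle: the argument is bookkeeping once one sees that the hypothesis is there to make Proposition~\ref{P:hs} applicable. The only point deserving care is the converse step, where one must invoke irredundancy of the vertex set — not merely that the $\mathbf v_i$ generate $\Pi$ — in order to rule out alternative convex representations of $\mathbf v_j$; this is where the assumption that $\Pi$ is presented by its extreme points genuinely enters.
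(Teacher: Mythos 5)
Your proof is correct and follows essentially the same route as the paper: the forward direction rests on the fact that $T_f$ and $1_\Pi$ are barycentric homomorphisms agreeing on the generating set $V$ (the paper writes out the convex-combination computation that Proposition~\ref{P:hs} encapsulates), and your converse is exactly the extremality/irredundancy observation the paper records in its remark on the Lagrange property. You are slightly more complete than the printed proof, which only spells out the forward implication and leaves the converse to that earlier remark and the assertion that $K_\Pi\leq\mathbf{Set}_{LP}^{\mathds 1}(\Pi,\mathcal{I}^n)$ follows by direct calculation.
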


\begin{proof}
By the Lagrange property of $f$ we obtain $T_f(\mathbf{v}_i)=\mathbf{v}_i$ for $i=1,\ldots n$. Recall that any element of a convex polytope can be represented as a convex combination of the vertices. Let $\mathbf{a}\in\Pi$ and let $\lambda_i\in I$ for $i=1,\ldots ,n$ be such that $\mathbf{a}=\sum_{i=1}^n\lambda_i\mathbf{v}_i$. Since $T_f$ is a barycentric homomorphism, we obtain $T_f(\mathbf{a})=T_f(\sum_{i=1}^n\lambda_i\mathbf{v}_i)=\sum_{i=1}^n\lambda_iT_f(\mathbf{v}_i)=\sum_{i=1}^n\lambda_i\mathbf{v}_i$. Hence $T_f=1_\Pi$ and $f\in K_\Pi$.
\end{proof}

In \cite[Section 3]{G} further properties of the tautological map $T$ have been investigated. We describe some of them in the language of bary\-centric algebra theory.
\begin{corollary}
Let $\Pi\subset \mathbb{R}^k$ be a convex polytope on $n$ vertices.
\begin{enumerate}
\item[$(\mathrm a)$] \cite[Theorem 3.2]{G}: $T\left(\mathbf{Set}^{\mathds 1}(\Pi,\mathcal{I}^n)\right)=\mathbf{Set}(\Pi,\Pi)$;
\item[$(\mathrm b)$] \cite[Theorem 3.4 (i)-(ii)]{G}: $T\left(\mathbf{Set}_{LP}^{\mathds 1}(\Pi,\mathcal{I}^n)\right)=\{h\in\mathbf{Set}(\Pi,\Pi)\colon\\ h(\mathbf{v}_i)=\mathbf{v}_i,\, i=1,\ldots ,n\}$;
\item[$(\mathrm c)$] \cite[Theorem 3.4 (iii)-(iv)]{G}: $T\left(\mathbf{B}_{LP}^{\mathds 1}(\Pi,\mathcal{I}^n)\right)=\{h\in\mathbf{B}(\Pi,\Pi)\colon\\ h(\mathbf{v}_i)=\mathbf{v}_i,\, i=1,\ldots ,n\}=T(K_\Pi)=\{1_\Pi\}$;
\item[$(\mathrm d)$] \cite[Proposition 3.5]{G}: $\mathbf{B}(\Pi,\Pi)=\{h\in\mathbf{B}(\Pi,\mathbb{R}^k)\colon h(V)\subset\Pi\}$.
\end{enumerate}
\end{corollary}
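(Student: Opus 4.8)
The plan is to derive all four equalities from two elementary observations about the tautological map, used together with facts already established. The first observation is that for every $f\in\mathbf{Set}^{\mathds 1}(\Pi,\mathcal{I}^n)$ and every $\mathbf a\in\Pi$ the value $T_f(\mathbf a)=\sum_{i=1}^n f_i(\mathbf a)\mathbf v_i$ is a convex combination of the vertices, by the partition-of-unity property, and hence lies in $\Pi$; this yields every inclusion of the form ``$T(\,\cdot\,)\subseteq\dots$''. The second is Example~\ref{L:Kalman}: conversely, every point of $\Pi$ is a convex combination of the $\mathbf v_i$, which lets one invert $T$ by reading off coefficients. For the parts involving homomorphisms I will also use that barycentric homomorphisms preserve convex combinations (Section~\ref{Ss:Coco}) and that a homomorphism is determined by its values on a generating set (Proposition~\ref{P:hs}).

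For $(\mathrm a)$, ``$\subseteq$'' is the first observation. For ``$\supseteq$'', given $h\in\mathbf{Set}(\Pi,\Pi)$, I choose for each $\mathbf a\in\Pi$ a representation $h(\mathbf a)=\sum_{i=1}^n c_i(\mathbf a)\mathbf v_i$ with $c_i(\mathbf a)\in I$ and $\sum_i c_i(\mathbf a)=1$ (available by Example~\ref{L:Kalman}) and set $f_i(\mathbf a):=c_i(\mathbf a)$; then $f=(f_1,\dots,f_n)\in\mathbf{Set}^{\mathds 1}(\Pi,\mathcal{I}^n)$ with $T_f=h$. For $(\mathrm b)$, if such an $f$ also has the Lagrange property then $T_f(\mathbf v_j)=\sum_i\delta_{ij}\mathbf v_i=\mathbf v_j$, which gives ``$\subseteq$''; for ``$\supseteq$'' I start from $h\in\mathbf{Set}(\Pi,\Pi)$ with $h(\mathbf v_j)=\mathbf v_j$ for all $j$ and run the construction of $(\mathrm a)$, noting that the representation of $h(\mathbf v_j)=\mathbf v_j$ is forced to be the trivial one: since $\mathbf v_j$ is an irredundant generator of $\Pi$ it is not a convex combination of the other vertices, so $\mathbf v_j=\sum_i c_i\mathbf v_i$ with $c_i\ge 0$ and $\sum_i c_i=1$ forces $c_i=\delta_{ij}$. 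Hence the resulting $f$ automatically lies in $\mathbf{Set}_{LP}^{\mathds 1}(\Pi,\mathcal{I}^n)$.

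For $(\mathrm c)$, whenever $f\in\mathbf{B}_{LP}^{\mathds 1}(\Pi,\mathcal{I}^n)$ its tautological image $T_f$ is a barycentric homomorphism (by the defining property of that set), and by $(\mathrm a)$ it maps $\Pi$ into $\Pi$, and by $(\mathrm b)$ it fixes every $\mathbf v_i$; thus $T_f\in\{h\in\mathbf{B}(\Pi,\Pi)\colon h(\mathbf v_i)=\mathbf v_i\}$, and Proposition~\ref{P:hs} collapses this last set to $\{1_\Pi\}$, since a homomorphism agreeing with $1_\Pi$ on the generating set $V$ must equal $1_\Pi$. For the reverse inclusions, every $\lambda\in K_\Pi$ has $T_\lambda=1_\Pi$ together with the partition-of-unity and Lagrange properties, so $K_\Pi\subseteq\mathbf{B}_{LP}^{\mathds 1}(\Pi,\mathcal{I}^n)$; $K_\Pi$ is nonempty by Example~\ref{L:Kalman} and $T(K_\Pi)=\{1_\Pi\}$ by Corollary~\ref{C:KPi}; finally the lemma \cite[Theorem 2.3]{G} above gives $\mathbf{B}_{LP}^{\mathds 1}(\Pi,\mathcal{I}^n)\subseteq K_\Pi$, so all four sets coincide with $\{1_\Pi\}$. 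Part $(\mathrm d)$ is the shortest: ``$\subseteq$'' is immediate from $h(V)\subseteq h(\Pi)\subseteq\Pi$, and for ``$\supseteq$'', if $h\in\mathbf{B}(\Pi,\mathbb{R}^k)$ with $h(V)\subseteq\Pi$ and $\mathbf a=\sum_i c_i\mathbf v_i\in\Pi$, then $h(\mathbf a)=\sum_i c_i\,h(\mathbf v_i)\in\Pi$ because $h$ preserves convex combinations and $\Pi$ is convex.

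The only point that goes beyond routine bookkeeping is the ``$\supseteq$'' direction of $(\mathrm b)$: it is precisely here that the hypothesis that the $\mathbf v_i$ are the vertices — equivalently, the irredundant generators of the algebra $\Pi$ — is used, through the uniqueness of the expression of a vertex as a convex combination of vertices. A minor caution worth recording is that the $f$ produced in $(\mathrm a)$ depends on a choice of convex representation at each point and is asserted to be only a function, with no continuity, consistently with the conventions of Section~\ref{S:pcs}.
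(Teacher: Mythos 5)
Your treatment of $(\mathrm a)$, $(\mathrm b)$ and $(\mathrm d)$ is correct and is essentially the paper's own argument: the forward inclusions come from reading $T_f(\mathbf a)$ as a convex combination of vertices, the backward inclusion in $(\mathrm a)$ from pulling back coefficients via Example~\ref{L:Kalman} (your pointwise choice of coefficients $c_i(\mathbf a)$ is exactly the paper's $p_i=\lambda_i\circ f$ for a fixed $\lambda\in K_\Pi$), and $(\mathrm d)$ from the fact that a homomorphism sends the subalgebra generated by $V$ into the subalgebra generated by $h(V)$. You in fact supply details the paper suppresses: the paper proves $(\mathrm b)$ by saying ``the same way as $(\mathrm a)$'' and dispatches $(\mathrm c)$, $(\mathrm d)$ with a bare appeal to Proposition~\ref{P:hs}, whereas you correctly isolate the one nontrivial point in $(\mathrm b)$, namely that extremality of the $\mathbf v_j$ forces $c_i(\mathbf v_j)=\delta_{ij}$ and hence the Lagrange property of the constructed $f$.

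The one step that does not go through as written is the inclusion $K_\Pi\subseteq\mathbf{B}_{LP}^{\mathds 1}(\Pi,\mathcal{I}^n)$ in $(\mathrm c)$. Under the paper's notational conventions ($\mathbf{B}(\mathcal A,\mathcal A')$ denotes barycentric \emph{homomorphisms}), membership in $\mathbf{B}_{LP}^{\mathds 1}(\Pi,\mathcal{I}^n)$ requires $f$ itself to be a homomorphism $\Pi\to\mathcal{I}^n$, i.e.\ each component $f_i$ to preserve weighted means (to be affine). A coordinate system $\lambda\in K_\Pi$ need not have affine components when $\Pi$ is not a simplex, and indeed no affine partition of unity with the Lagrange property exists on, say, a square: an affine function vanishing at three of its vertices vanishes at the fourth, so it cannot take the value $1$ there. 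Your stated justification --- $T_\lambda=1_\Pi$ together with the partition-of-unity and Lagrange properties --- actually certifies membership in $T^{-1}\bigl(\mathbf{B}(\Pi,\mathbb{R}^k)\bigr)\cap\mathbf{Set}_{LP}^{\mathds 1}(\Pi,\mathcal{I}^n)$, which is the hypothesis of the preceding lemma, not membership in $\mathbf{B}(\Pi,\mathcal{I}^n)$. If $\mathbf{B}_{LP}^{\mathds 1}$ is read in that weaker sense the chain of equalities in your $(\mathrm c)$ closes correctly; under the literal reading the set is empty for non-simplices and the inclusion $\{1_\Pi\}\subseteq T\bigl(\mathbf{B}_{LP}^{\mathds 1}(\Pi,\mathcal{I}^n)\bigr)$ cannot be established by this route. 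You should make explicit which reading you intend and, under the literal one, restrict the claim or exhibit an affine witness (which exists only for simplices).
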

\begin{proof}
$(\mathrm a)$ ($\Rightarrow$) Let $f=T_p$ for some $p\in\mathbf{Set}^{\mathds 1}(\Pi,\mathcal{I}^n)$. It follows that $f(\mathbf a)=\sum_{i=1}^np_i(\mathbf a)\mathbf v_i$ for any $\mathbf a\in\Pi$ and, as a convex combination of vertices of $\Pi$, $f(\mathbf a)\in\Pi$.\vskip2pt
($\Leftarrow$) Now, let $f\in\mathbf{Set}(\Pi,\Pi)$. Let $\lambda\in K_\Pi$ be as described in Example \ref{L:Kalman}. For $i=1,\ldots ,n$ take $p_i=\lambda_i\circ f$. Then $p=(p_1,\ldots, p_n)$ has the partition of unity property, since $\lambda$ has it. Moreover, $\sum_{i=1}^np_i(\mathbf a)\mathbf v_i= \sum_{i=1}^n \lambda_i(f(\mathbf a))\mathbf v_i=f(\mathbf a)$ for any $\mathbf a\in\Pi$. Hence, $f=T_p$.\vskip2pt
$(\mathrm b)$ is proved the same way as $(\mathrm a)$. \vskip2pt
Items $(\mathrm c)$ and $(\mathrm d)$ follow immediately by Proposition \ref{P:hs}.
\end{proof}

\end{document}